\definecolor{webgreen}{rgb}{0,.5,0}
\definecolor{webbrown}{rgb}{.6,0,0}
\newtheorem{theorem}{Theorem}
\newtheorem{lemma}[theorem]{Lemma}
\newtheorem{proposition}[theorem]{Proposition}
\newenvironment{proof}[1][Proof]{\noindent\textbf{#1.} }{\ \rule{0.5em}{0.5em}}
\begin{document}

\begin{center}
\vskip1cm

{\LARGE \textbf{A problem of I. Ra\c{s}a on Bernstein polynomials and convex functions}}

\vspace{2cm}

{\large Ulrich Abel}\\[3mm]
\textit{Fachbereich MND}\\[0pt]
\textit{Technische Hochschule Mittelhessen}\\[0pt]
\textit{Wilhelm-Leuschner-Stra\ss e 13, 61169 Friedberg, }\\[0pt]
\textit{Germany}\\[0pt]
\href{mailto:Ulrich.Abel@mnd.thm.de}{\texttt{Ulrich.Abel@mnd.thm.de}}
\end{center}

\vspace{2cm}

{\large \textbf{Abstract.}}

\bigskip

We present an elementary proof of a conjecture by I. Ra\c{s}a which is an inequality involving Bernstein basis polynomials and convex functions. It was affirmed in positive very recently by the use of stochastic convex orderings. Moreover, we derive the corresponding results for Mirakyan-Favard-Sz\'{a}sz operators and Baskakov operators.

\bigskip

\textit{Mathematics Subject Classification (2010): }  
26D05, 
% Inequalities for trigonometric functions and polynomials
39B62.
%Functional inequalities, including subadditivity, convexity, etc.

\emph{Keywords:} Inequalities for polynomials, Functional inequalities including convexity.

\vspace{2cm}

\section{Introduction}  

For $n=0,1,2,\ldots $ and $\nu =0,1,\ldots ,n$, let 
\begin{equation*}
p_{n,\nu }\left( x\right) =\binom{n}{\nu }x^{\nu }\left( 1-x\right) ^{n-\nu }
\end{equation*}%
denote the Bernstein basis polynomials. For $\nu >n$ we define $p_{n,\nu
}\left( x\right) =0$. 
Very recently, J. Mrowiec, T. Rajba and S. W\k{a}sowicz \cite{Mrowiec-ea-2016}\ proved the following theorem.

\begin{theorem}[\protect\cite{Mrowiec-ea-2016}]
\label{theorem-conj-bernstein}Let $n\in \mathbb{N}$. If $f\in C\left[ 0,1%
\right] $ is a convex function, then 
\begin{equation}
\sum\limits_{i=0}^{n}\sum\limits_{j=0}^{n}\left[ p_{n,i}\left( x\right)
p_{n,j}\left( x\right) +p_{n,i}\left( y\right) p_{n,j}\left( y\right)
-2p_{n,i}\left( x\right) p_{n,j}\left( y\right) \right] {f}\left( \frac{i+j}{%
2n}\right) \geq 0,  \label{conjecture-Bernstein}
\end{equation}%
for all $x,y\in \left[ 0,1\right] $.
\end{theorem}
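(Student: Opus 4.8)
The plan is to collapse the double sum onto a single vector and then sum by parts twice. Write $d_i:=p_{n,i}(x)-p_{n,i}(y)$ for $i=0,\dots,n$ and $h(k):=f\!\left(k/(2n)\right)$ for $k=0,\dots,2n$. First I would observe that, after interchanging $i$ and $j$ in the mixed product $p_{n,i}(x)p_{n,j}(y)$ and using that $f\bigl((i+j)/(2n)\bigr)$ is symmetric under $i\leftrightarrow j$, the bracket in \eqref{conjecture-Bernstein} may be replaced by $d_id_j$; hence \eqref{conjecture-Bernstein} is equivalent to
\[
S:=\sum_{i=0}^{n}\sum_{j=0}^{n}d_id_j\,h(i+j)\ge 0 .
\]
Two facts about $d$ will drive the proof: $\sum_{i=0}^{n}d_i=0$ (partition of unity), and the partial sums $D_i:=\sum_{l=0}^{i}d_l$ all share one sign.

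Next I would carry out Abel summation separately in $j$ and in $i$. Because $D_{-1}=D_n=0$, the boundary terms vanish and one is left with the identity
\[
S=\sum_{i=0}^{n-1}\sum_{j=0}^{n-1}D_iD_j\,\Delta^2h(i+j),\qquad \Delta^2h(k):=h(k+2)-2h(k+1)+h(k).
\]
Since the three arguments $k/(2n)$, $(k+1)/(2n)$, $(k+2)/(2n)$ are equally spaced in $[0,1]$, convexity of $f$ gives $\Delta^2h(k)\ge 0$ for all admissible $k$. Thus $S\ge 0$ follows as soon as $D_iD_j\ge 0$ for every pair $i,j$, i.e. as soon as all the $D_i$ have the same sign.

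Finally — and this is the only substantive point — I would prove the sign property. Here $D_i=\sum_{l=0}^{i}p_{n,l}(x)-\sum_{l=0}^{i}p_{n,l}(y)$, so it suffices to show that $t\mapsto\sum_{l=0}^{i}p_{n,l}(t)$ is monotone on $[0,1]$ for each fixed $i$. This is classical: from $p_{n,l}'(t)=n\bigl(p_{n-1,l-1}(t)-p_{n-1,l}(t)\bigr)$ the sum telescopes to $\tfrac{d}{dt}\sum_{l=0}^{i}p_{n,l}(t)=-n\,p_{n-1,i}(t)\le 0$, so the partial sum is nonincreasing. Hence $D_i\ge 0$ for all $i$ when $x\le y$ and $D_i\le 0$ for all $i$ when $x\ge y$; in either case $D_iD_j\ge 0$, and the theorem follows. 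I expect the main obstacle to be merely spotting the double summation-by-parts identity together with the monotonicity of the cumulative Bernstein sums; once these are in place the argument invokes nothing beyond the definition of convexity, and the same scheme should transfer to the Mirakyan--Favard--Sz\'{a}sz and Baskakov operators, whose basis functions have analogous monotone partial sums.
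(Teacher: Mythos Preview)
Your argument is correct. Both your proof and the paper's reduce the left-hand side to an expression of the form $\sum_{k=0}^{2n-2} c_k\,\Delta^2 a_k$ with $a_k=f(k/(2n))$ and then invoke convexity, but the identification of the nonnegative weights $c_k$ proceeds along different lines. The paper works with generating functions: it shows (Lemmas~\ref{lemma-bernstein1}--\ref{lemma-bernstein2}) that the double sum equals $\sum_k a_k\,\frac{1}{k!}\bigl[(\partial/\partial z)^k (z^2 g(z))\bigr]_{z=-1}$ for $g(z)=\bigl(((1+xz)^n-(1+yz)^n)/z\bigr)^2$, applies the Leibniz rule to $z^2 g(z)$ to produce the second differences, and proves $g^{(k)}(-1)\ge 0$ via the algebraic factorisation $\frac{a^n-b^n}{a-b}=\sum_{j}a^{j}b^{n-1-j}$. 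You instead perform a double Abel summation directly on the discrete sums, obtaining $c_k=\sum_{i+j=k}D_iD_j$, and secure nonnegativity from the monotonicity of the cumulative Bernstein sums $t\mapsto\sum_{l\le i}p_{n,l}(t)$. In fact the two sets of weights coincide (the Taylor coefficients of $((1+xz)^n-(1+yz)^n)/z$ at $z=-1$ are exactly $-D_m$), so the approaches meet at the same identity; what differs is the mechanism for nonnegativity. Your route avoids generating functions entirely and has a transparent probabilistic flavour (stochastic monotonicity of the binomial family), while the paper's generating-function template makes the passage to the Sz\'asz and Baskakov cases almost automatic by swapping $(1+xz)^n$ for $e^{xz}$ or $(1-xz)^{-n}$.
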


This inequality involving Bernstein basis polynomials and convex functions was stated as an open problem 25 years ago by I. Ra\c{s}a. J. Mrowiec, T. Rajba and S. W\k{a}sowicz \cite{Mrowiec-ea-2016}\ affirmed the conjecture in positive. Their proof makes heavy use of probability theory. As a tool they applied stochastic convex orderings (which they proved for binomial distributions) as well as the so-called concentration inequality.

The purpose of this note is a short elementary proof of the above theorem.
It even doesn't take advantage of the Levin--Ste\v{c}kin theorem \cite{Levin-1960} (see also \cite{Niculescu-Persson-book-2006}, Theorem 4.2.7) which gives necessary and sufficient conditions on $F$ for the
non-negativity of $\int_{a}^{b}f\left( x\right) dF\left( x\right) $ if $f$ is a convex function on $\left[ a,b\right] $.

In the last two sections we prove the corresponding results for Mirakyan-Favard-Sz\'{a}sz operators and Baskakov operators.

\section{An elementary proof of Theorem~\protect\ref{theorem-conj-bernstein}}

We start with a few auxiliary results.

\begin{lemma}
\label{lemma-bernstein1}For $n,k\in \mathbb{N}$, 
\begin{equation*}
\sum\limits_{i=0}^{k}p_{n,i}\left( x\right) p_{n,k-i}\left( y\right) =\frac{%
1}{k!}\left[ \left( \frac{\partial }{\partial z}\right) ^{k}\left(
1+xz\right) ^{n}\left( 1+yz\right) ^{n}\right] _{\mid z=-1}.
\end{equation*}
\end{lemma}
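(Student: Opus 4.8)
The plan is to prove the identity by recognizing both sides as coefficients in the same generating function, so the core is a bookkeeping argument with the binomial theorem.

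First I would expand the left-hand side using the definition of the Bernstein basis polynomials. Writing $p_{n,i}(x)=\binom{n}{i}x^i(1-x)^{n-i}$ and similarly for $p_{n,k-i}(y)$, the sum $\sum_{i=0}^k p_{n,i}(x)p_{n,k-i}(y)$ is, up to the factor $(1-x)^n(1-y)^n$, a Vandermonde-type convolution in the variables $x/(1-x)$ and $y/(1-y)$. More cleanly, I would observe that
\begin{equation*}
\sum_{k\ge 0}\left(\sum_{i=0}^k p_{n,i}(x)p_{n,k-i}(y)\right)z^k
=\left(\sum_{i\ge 0}p_{n,i}(x)z^i\right)\left(\sum_{j\ge 0}p_{n,j}(y)z^j\right),
\end{equation*}
and each factor sums by the binomial theorem to $(1-x)^n(1+\tfrac{xz}{1-x}\cdot\tfrac{1}{1})^n$ — more precisely $\sum_{i\ge0}\binom{n}{i}x^i(1-x)^{n-i}z^i=(1-x+xz)^n$. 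Hence the generating function of the left-hand side in $z$ equals $(1-x+xz)^n(1-y+yz)^n$.

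Next I would relate this to the right-hand side. The claimed expression is $\frac{1}{k!}\left[\left(\frac{\partial}{\partial z}\right)^k (1+xz)^n(1+yz)^n\right]_{z=-1}$, which is precisely the Taylor coefficient: if $g(z)=(1+xz)^n(1+yz)^n$, then $\frac{1}{k!}g^{(k)}(-1)$ is the coefficient of $(z+1)^k$ in the expansion of $g$ about $z=-1$. Substituting $z=w-1$ gives $g(w-1)=(1+x(w-1))^n(1+y(w-1))^n=(1-x+xw)^n(1-y+yw)^n$, whose coefficient of $w^k$ is exactly the left-hand side by the previous paragraph. So the two sides agree coefficientwise, which finishes the proof.

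I expect no serious obstacle here; the only things to be careful about are (i) making sure the index ranges match, i.e. that extending the sums to $i,j\ge 0$ is harmless because $p_{n,i}\equiv 0$ for $i>n$, so all series are in fact polynomials and there are no convergence issues; and (ii) getting the shift $z\mapsto z-1$ in the right direction so that $(1+xz)^n$ at $z=-1$ corresponds to the constant term and the $k$-th derivative picks out the $w^k$ coefficient after the substitution. One can equivalently avoid generating functions entirely by applying the Leibniz rule to $\left(\frac{\partial}{\partial z}\right)^k\big[(1+xz)^n(1+yz)^n\big]$, differentiating $(1+xz)^n$ a total of $i$ times and $(1+yz)^n$ a total of $k-i$ times, then evaluating at $z=-1$; the falling-factorial coefficients combine with $\binom{k}{i}/k!$ to reproduce $\binom{n}{i}\binom{n}{k-i}$ and the powers $x^i(1-x)^{n-i}$, $y^{k-i}(1-y)^{n-k+i}$, but the generating-function route is cleaner to present.
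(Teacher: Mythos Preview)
Your argument is correct. The paper's own proof is precisely the Leibniz-rule alternative you sketch at the end: it writes each factor $p_{n,i}(x)=\tfrac{1}{i!}\big[(\partial/\partial z)^i(1+xz)^n\big]_{z=-1}$, so that the convolution becomes $\tfrac{1}{k!}\sum_{i=0}^k\binom{k}{i}\big[(\partial_z)^i(1+xz)^n\big]\big[(\partial_z)^{k-i}(1+yz)^n\big]_{z=-1}$, and then invokes Leibniz's product rule to collapse this into the $k$-th derivative of the product. Your main route is the generating-function dual of this: you identify the left-hand side as the $z^k$-coefficient of $(1-x+xz)^n(1-y+yz)^n$ via the Cauchy product, and the right-hand side as the same coefficient via the Taylor shift $z\mapsto z-1$ in $(1+xz)^n(1+yz)^n$. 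The two arguments are essentially equivalent (Leibniz's rule \emph{is} the statement that Taylor coefficients of a product are the Cauchy product of Taylor coefficients), but your presentation makes the underlying polynomial identity $(1-x+xw)^n(1-y+yw)^n=g(w-1)$ explicit, which is arguably more transparent and ports directly to the Sz\'asz and Baskakov settings later in the paper. The paper's version is slightly shorter on the page; yours requires no differentiation at all. Your caveats (i) and (ii) are exactly the right things to check, and both hold.
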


Note that the formula is valid also if $k>n$.

\begin{proof}
We have 
\begin{eqnarray*}
&&\sum\limits_{i=0}^{k}p_{n,i}\left( x\right) p_{n,k-i}\left( y\right) \\
&=&\sum\limits_{i=0}^{k}\binom{n}{i}x^{i}\left( 1-x\right) ^{n-i}\binom{n}{%
k-i}y^{k-i}\left( 1-y\right) ^{n-\left( k-i\right) } \\
&=&\frac{1}{k!}\left\{ \sum\limits_{i=0}^{k}\binom{k}{i}\left[ \left( \frac{%
\partial }{\partial z}\right) ^{i}\left( 1+xz\right) ^{n}\right] \left[
\left( \frac{\partial }{\partial z}\right) ^{k-i}\left( 1+yz\right) ^{n}%
\right] \right\} _{\mid z=-1}
\end{eqnarray*}%
and the lemma follows by an application of the Leibniz rule for the
differentiation of products of functions.
\end{proof}

The next result is a representation of the left-hand side of Eq. $\left( \ref%
{conjecture-Bernstein}\right) $.

\begin{lemma}
\label{lemma-bernstein2}%
\begin{eqnarray*}
&&\sum\limits_{i=0}^{n}\sum\limits_{j=0}^{n}\left[ p_{n,i}\left( x\right)
p_{n,j}\left( x\right) +p_{n,i}\left( y\right) p_{n,j}\left( y\right)
-2p_{n,i}\left( x\right) p_{n,j}\left( y\right) \right] {f}\left( \frac{i+j}{%
2n}\right) \\
&=&\sum\limits_{k=0}^{2n}{f}\left( \frac{k}{2n}\right) \frac{1}{k!}\left. %
\left[ \left( \frac{\partial }{\partial z}\right) ^{k}\left[ \left(
1+xz\right) ^{n}-\left( 1+yz\right) ^{n}\right] ^{2}\right] \right\vert
_{z=-1}.
\end{eqnarray*}
\end{lemma}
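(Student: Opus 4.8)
The plan is to reorganize the double sum on the left-hand side by collecting, for each fixed value $k=i+j$, all the terms carrying the common factor $f\!\left(k/(2n)\right)$. Since $i$ and $j$ each run from $0$ to $n$, the index $k$ runs from $0$ to $2n$; moreover, because $p_{n,\nu }\left( \cdot \right) =0$ whenever $\nu >n$, the constraint ``$0\le i,j\le n$, $i+j=k$'' may be replaced by the unconstrained convolution sum $\sum_{i=0}^{k}p_{n,i}\left( \cdot \right) p_{n,k-i}\left( \cdot \right) $ without changing the value. Hence the left-hand side equals
\begin{equation*}
\sum_{k=0}^{2n}f\!\left( \frac{k}{2n}\right) \sum_{i=0}^{k}\left[ p_{n,i}\left( x\right) p_{n,k-i}\left( x\right) +p_{n,i}\left( y\right) p_{n,k-i}\left( y\right) -2p_{n,i}\left( x\right) p_{n,k-i}\left( y\right) \right] .
\end{equation*}

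Next I would apply Lemma~\ref{lemma-bernstein1} three times---once with the pair $\left( x,x\right) $, once with $\left( y,y\right) $, and once with $\left( x,y\right) $---to rewrite each of the three convolution sums as a $k$-th derivative evaluated at $z=-1$. Using $\left( 1+xz\right) ^{n}\left( 1+xz\right) ^{n}=\left( 1+xz\right) ^{2n}$ and likewise for $y$, the inner bracket becomes
\begin{equation*}
\frac{1}{k!}\left[ \left( \frac{\partial }{\partial z}\right) ^{k}\left( \left( 1+xz\right) ^{2n}+\left( 1+yz\right) ^{2n}-2\left( 1+xz\right) ^{n}\left( 1+yz\right) ^{n}\right) \right] _{\mid z=-1}.
\end{equation*}
Recognizing the expression inside the derivative as the perfect square $\left[ \left( 1+xz\right) ^{n}-\left( 1+yz\right) ^{n}\right] ^{2}$ and inserting this back into the $k$-sum yields exactly the claimed right-hand side.

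There is essentially no obstacle; the only points meriting a word of care are the interchange of the order of summation and the harmlessness of extending the convolution range, both of which are immediate from the vanishing convention $p_{n,\nu }=0$ for $\nu >n$. It is also worth remarking that $\left[ \left( 1+xz\right) ^{n}-\left( 1+yz\right) ^{n}\right] ^{2}$ is a polynomial in $z$ of degree at most $2n$, so its $k$-th derivative vanishes for $k>2n$; this is consistent with the sum terminating at $k=2n$ and shows that the upper summation limit could equally well be taken to be $\infty$, a remark that will be convenient when passing to the Mirakyan--Favard--Sz\'{a}sz and Baskakov settings.
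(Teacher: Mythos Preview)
Your argument is correct and follows essentially the same route as the paper: group the double sum by $k=i+j$, invoke Lemma~\ref{lemma-bernstein1} (with the pairs $(x,x)$, $(y,y)$, $(x,y)$), and then recognize $(1+xz)^{2n}+(1+yz)^{2n}-2(1+xz)^{n}(1+yz)^{n}$ as the square $[(1+xz)^{n}-(1+yz)^{n}]^{2}$. Your explicit mention of the vanishing convention $p_{n,\nu}=0$ for $\nu>n$ to justify the convolution range is a welcome clarification, but otherwise there is no material difference from the paper's proof.
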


\begin{proof}
It is a direct consequence of the preceding lemma that 
\begin{eqnarray*}
&&\sum\limits_{i=0}^{n}\sum\limits_{j=0}^{n}\left[ p_{n,i}\left( x\right)
p_{n,j}\left( x\right) +p_{n,i}\left( y\right) p_{n,j}\left( y\right)
-2p_{n,i}\left( x\right) p_{n,j}\left( y\right) \right] {f}\left( \frac{i+j}{%
2n}\right) \\
&=&\sum\limits_{k=0}^{2n}{f}\left( \frac{k}{2n}\right) \frac{1}{k!}\left.
\left( \frac{\partial }{\partial z}\right) ^{k}\left[ \left( 1+xz\right)
^{2n}+\left( 1+yz\right) ^{2n}-2\left( 1+xz\right) ^{n}\left( 1+yz\right)
^{n}\right] \right\vert _{z=-1}
\end{eqnarray*}%
and the lemma follows, by the binomial formula.
\end{proof}

For fixed $n\in \mathbb{N}$ and $x,y\in \left[ 0,1\right] $, we define 
\begin{equation*}
g\left( z\right) \equiv g_{n}\left( z;x,y\right) =\left( \frac{\left(
1+xz\right) ^{n}-\left( 1+yz\right) ^{n}}{z}\right) ^{2}.
\end{equation*}%
Note that $g$ is a polynomial in $z$ of degree at most $2n-2$.

The next proposition is the key result.

\begin{proposition}
\label{prop-bernstein}Let $\left( a_{k}\right) _{k=0}^{2n}$ be a real
sequence and fix $x,y\in \left[ 0,1\right] $. Then, 
\begin{equation}
\sum\limits_{i=0}^{n}\sum\limits_{j=0}^{n}\left[ p_{n,i}\left( x\right)
p_{n,j}\left( x\right) +p_{n,i}\left( y\right) p_{n,j}\left( y\right)
-2p_{n,i}\left( x\right) p_{n,j}\left( y\right) \right] \cdot
a_{i+j}=\sum\limits_{k=0}^{2n-2}\left( \Delta ^{2}a_{k}\right) \frac{1}{k!}%
g^{\left( k\right) }\left( -1\right)  \label{bernstein-identity}
\end{equation}%
and $g^{\left( k\right) }\left( -1\right) \geq 0$, for $k=0,1,\ldots ,2n-2$.
\end{proposition}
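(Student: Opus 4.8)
The identity \eqref{bernstein-identity} should be reached by summation by parts (Abel summation) applied twice. Starting from Lemma~\ref{lemma-bernstein2} (with $a_k$ in place of $f(k/(2n))$), the left-hand side equals $\sum_{k=0}^{2n} a_k \frac{1}{k!} h^{(k)}(-1)$, where $h(z) = \bigl[(1+xz)^n-(1+yz)^n\bigr]^2 = z^2 g(z)$. The plan is to express $\frac{1}{k!}h^{(k)}(-1)$ in terms of the Taylor coefficients of $g$ at $z=-1$. Writing $g(z) = \sum_{m\ge 0} c_m (z+1)^m$, one has $z^2 = (z+1)^2 - 2(z+1) + 1$, so $h(z) = \sum_m c_m\bigl[(z+1)^{m+2} - 2(z+1)^{m+1} + (z+1)^m\bigr]$, whence $\frac{1}{k!}h^{(k)}(-1) = c_{k-2} - 2c_{k-1} + c_k = \Delta^2 c_{k-2}$ in backward-difference form (with $c_{-1}=c_{-2}=0$ and $c_m = 0$ for $m > 2n-2$). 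Substituting and re-indexing the sum $\sum_{k=0}^{2n} a_k(c_{k-2}-2c_{k-1}+c_k)$ — a discrete integration by parts, using that $c_m$ vanishes outside $0,\dots,2n-2$ to kill boundary terms — collects the coefficient of $c_k$ as exactly $a_{k+2} - 2a_{k+1} + a_k = \Delta^2 a_k$. Since $c_k = \frac{1}{k!}g^{(k)}(-1)$, this is precisely \eqref{bernstein-identity}.

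For the nonnegativity $g^{(k)}(-1)\ge 0$, the key observation is that $g$ is a \emph{square}: $g(z) = q(z)^2$ where $q(z) = \dfrac{(1+xz)^n-(1+yz)^n}{z}$ is a polynomial in $z$ of degree $n-1$. The plan is to show that $q$ has all nonnegative Taylor coefficients when expanded in powers of $(z+1)$, and then note that the product (square) of a power series with nonnegative coefficients again has nonnegative coefficients, so $c_k = \frac{1}{k!}g^{(k)}(-1) \ge 0$. To see that $q$ expands nonnegatively around $-1$: substitute $z = w-1$, so $1+xz = (1-x) + xw$ and $1+yz = (1-y)+yw$; since $x,y\in[0,1]$, both $(1-x)+xw$ and $(1-y)+yw$ have nonnegative coefficients in $w$. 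Then $q = \dfrac{[(1-x)+xw]^n - [(1-y)+yw]^n}{w-1}$. Factoring $A^n - B^n = (A-B)(A^{n-1} + A^{n-2}B + \cdots + B^{n-1})$ with $A = (1-x)+xw$, $B=(1-y)+yw$, we get $A - B = (1-x-(1-y)) + (x-y)w = (y-x)(1-w)$, so $q = \dfrac{(y-x)(1-w)\,(A^{n-1}+\cdots+B^{n-1})}{w-1} = (x-y)\bigl(A^{n-1}+A^{n-2}B+\cdots+B^{n-1}\bigr)$. The bracketed factor is a sum of products of polynomials with nonnegative $w$-coefficients, hence has nonnegative $w$-coefficients; multiplying by the scalar $(x-y)$ only flips an overall sign, which disappears upon squaring. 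Therefore $g(z) = q(z)^2 = (x-y)^2\bigl(\sum_{\ell=0}^{n-1} A^{n-1-\ell}B^\ell\bigr)^2$ has nonnegative coefficients in $w = z+1$, i.e.\ $g^{(k)}(-1) \ge 0$ for all $k$.

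The main obstacle is organizational rather than deep: one must be careful with the index bookkeeping in the double summation by parts (tracking where the finite-support conditions $c_{-1}=c_{-2}=0$ and $c_m=0$ for $m>2n-2$ are used to discard boundary contributions), and one must phrase the nonnegativity argument cleanly — the cleanest route is to change variables to $w = z+1$ once and for all, observe that $A = (1-x)+xw$ and $B=(1-y)+yw$ lie in the semiring of polynomials with nonnegative coefficients, and conclude that $q(z) = \pm\sum_{\ell=0}^{n-1}A^{n-1-\ell}B^\ell$ does too up to sign, so its square $g$ does. Everything else is routine: the factorization $A^n-B^n=(A-B)\sum A^{n-1-\ell}B^\ell$ cancels the apparent pole at $w=1$ (equivalently $z=0$), confirming in passing that $g$ is genuinely a polynomial of degree $\le 2n-2$ as asserted.
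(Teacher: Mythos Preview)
Your proof is correct and follows essentially the same route as the paper: for the identity you relate the Taylor coefficients of $z^{2}g(z)$ at $z=-1$ to those of $g$ via $z^{2}=(z+1)^{2}-2(z+1)+1$ and then re-index, which is exactly the paper's Leibniz-rule computation in different clothing, and for the nonnegativity you use the same factorization $g(z)=(x-y)^{2}\bigl(\sum_{\ell=0}^{n-1}(1+xz)^{\ell}(1+yz)^{n-1-\ell}\bigr)^{2}$, merely making the substitution $w=z+1$ explicit where the paper leaves it implicit.
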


Here $\Delta $ denotes the forward difference $\Delta a_{k}:=a_{k+1}-a_{k}$
such that $\Delta ^{2}a_{k}=a_{k+2}-2a_{k+1}+a_{k}$.

Because $g$ is a polynomial in $z$ of degree at most $2n-2$, it is obvious
that $g^{\left( 2n-1\right) }\left( -1\right) =g^{\left( 2n\right) }\left(
-1\right) =0$.

\begin{proof}
Observe that 
\begin{equation*}
\left( z^{2}g\left( z\right) \right) ^{\left( k\right) }=z^{2}g^{\left(
k\right) }\left( z\right) +\binom{k}{1}\cdot 2zg^{\left( k-1\right) }\left(
z\right) +\binom{k}{2}\cdot 2g^{\left( k-2\right) }\left( z\right) .
\end{equation*}%
By Lemma~\ref{lemma-bernstein2}, we have 
\begin{eqnarray*}
&&\sum\limits_{i=0}^{n}\sum\limits_{j=0}^{n}\left[ p_{n,i}\left( x\right)
p_{n,j}\left( x\right) +p_{n,i}\left( y\right) p_{n,j}\left( y\right)
-2p_{n,i}\left( x\right) p_{n,j}\left( y\right) \right] \cdot a_{i+j} \\
&=&\sum\limits_{k=0}^{2n}a_{k}\frac{1}{k!}\left. \left( \frac{\partial }{%
\partial z}\right) ^{k}\left( z^{2}g\left( z\right) \right) \right\vert
_{z=-1} \\
&=&\sum\limits_{k=0}^{2n-2}a_{k}\frac{1}{k!}g^{\left( k\right) }\left(
-1\right) -2\sum\limits_{k=1}^{2n-1}a_{k}\frac{1}{\left( k-1\right) !}%
g^{\left( k-1\right) }\left( -1\right) +\sum\limits_{k=2}^{2n}a_{k}\frac{1}{%
\left( k-2\right) !}g^{\left( k-2\right) }\left( -1\right) \\
&=&\sum\limits_{k=0}^{2n-2}\left( a_{k}-2a_{k+1}+a_{k+2}\right) \frac{1}{k!}%
g^{\left( k\right) }\left( -1\right)
\end{eqnarray*}%
which proves Eq. $\left( \ref{bernstein-identity}\right) $. Noting that 
\begin{eqnarray*}
g\left( z\right) &=&\left( x-y\right) ^{2}\left( \frac{\left( 1+xz\right)
^{n}-\left( 1+yz\right) ^{n}}{\left( 1+xz\right) -\left( 1+yz\right) }%
\right) ^{2} \\
&=&\left( x-y\right) ^{2}\left( \sum\limits_{k=0}^{n-1}\left( 1+xz\right)
^{k}\left( 1+yz\right) ^{n-1-k}\right) ^{2}
\end{eqnarray*}%
it is immediate that $g^{\left( k\right) }\left( -1\right) \geq 0$, for $%
k=0,1,\ldots ,2n-2$, if $x,y\in \left[ 0,1\right] $.
\end{proof}

Using the elementary formula 
\begin{equation*}
\left( \frac{b^{n}-a^{n}}{b-a}\right)
^{2}=\sum\limits_{j=0}^{n-1}a^{j}b^{2n-2-j}\min \left\{ j,2n-2-j\right\}
\end{equation*}%
we obtain more precisely 
\begin{equation*}
g\left( z\right) =\left( x-y\right) ^{2}\sum\limits_{j=0}^{2n-2}\left(
1+xz\right) ^{j}\left( 1+yz\right) ^{2n-2-j}\min \left\{ j,2n-2-j\right\} .
\end{equation*}%
Hence, 
\begin{eqnarray*}
g^{\left( k\right) }\left( -1\right) &=&k!\left( x-y\right)
^{2}\sum\limits_{j=0}^{2n-2}\min \left\{ j,2n-2-j\right\} \\
&&\times \sum\limits_{i=0}^{k}\binom{j}{i}x^{i}\left( 1-x\right) ^{j-i}%
\binom{2n-2-j}{k-i}y^{k-i}\left( 1-y\right) ^{2n-2-j-\left( k-i\right) } \\
&=&k!\left( x-y\right) ^{2}\sum\limits_{j=0}^{2n-2}\min \left\{
j,2n-2-j\right\} \sum\limits_{i=0}^{k}p_{j,i}\left( x\right)
p_{2n-2-j,k-i}\left( y\right) .
\end{eqnarray*}

\begin{proof}[Proof of Theorem~\protect\ref{theorem-conj-bernstein}]
For $k=0,1,\ldots ,2n-2$, we put 
\begin{equation*}
a_{k}={f}\left( \frac{k}{2n}\right) .
\end{equation*}%
If $f\in C\left[ 0,1\right] $ is a convex function it follows that $\Delta
^{2}a_{k}\geq 0$, for $k=0,1,\ldots ,2n-2$. Therefore, application of
Proposition~\ref{prop-bernstein} proves Theorem~\ref{theorem-conj-bernstein}.
\end{proof}

\section{Mirakyan-Favard-Sz\'{a}sz operators}

The Mirakyan-Favard-Sz\'{a}sz $S_{n}$ operators associate to each function $%
f $ of (at most) exponential growth on $\left[ 0,\infty \right) $ the
function 
\begin{equation*}
\left( S_{n}f\right) \left( x\right) =e^{-nx}\sum\limits_{\nu =0}^{\infty }%
\frac{\left( nx\right) ^{\nu }}{\nu !}{f}\left( \frac{\nu }{n}\right) \text{
\qquad }\left( x\in \left[ 0,\infty \right) \right) .
\end{equation*}%
If, for $n,\nu =0,1,2,\ldots $, 
\begin{equation*}
s_{\nu }\left( x\right) =e^{-x}\frac{x^{\nu }}{\nu !}
\end{equation*}%
denote the corresponding basis functions, the operators can be written in
the form 
\begin{equation*}
\left( S_{n}f\right) \left( x\right) =\sum\limits_{\nu =0}^{\infty }s_{\nu
}\left( nx\right) {f}\left( \frac{\nu }{n}\right) \text{ \qquad }\left( x\in %
\left[ 0,\infty \right) \right) .
\end{equation*}

\begin{theorem}
\label{theorem-conj-mfs}Let $n\in \mathbb{N}$. If $f\in C\left[ 0,\infty
\right) $ is a convex function of (at most) exponential growth, then 
\begin{equation*}
\sum\limits_{i=0}^{\infty }\sum\limits_{j=0}^{\infty }\left[ s_{i}\left(
x\right) s_{j}\left( x\right) +s_{i}\left( y\right) s_{j}\left( y\right)
-2s_{i}\left( x\right) s_{j}\left( y\right) \right] {f}\left( i+j\right)
\geq 0,
\end{equation*}%
for all $x,y\in \left[ 0,\infty \right) $.
\end{theorem}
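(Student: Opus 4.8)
The plan is to mimic exactly the three-step scheme that worked for Bernstein polynomials, replacing the binomial generating function $(1+xz)^n$ by the exponential $e^{xz}$. First I would establish the analogue of Lemma~\ref{lemma-bernstein1}: since $s_\nu(x)=e^{-x}x^\nu/\nu!$, the Cauchy product gives
\begin{equation*}
\sum_{i=0}^{k}s_i(x)s_{k-i}(y)=\frac{1}{k!}\left[\left(\frac{\partial}{\partial z}\right)^k e^{xz}e^{yz}\right]_{\mid z=-1}=\frac{(x+y)^k}{k!}e^{-(x+y)},
\end{equation*}
which one can also see directly as $s_k(x+y)$. Summing against $f(i+j)$ over the double index and collecting terms with $i+j=k$ then yields, as in Lemma~\ref{lemma-bernstein2}, the representation of the left-hand side as $\sum_{k=0}^{\infty}f(k)\,\frac1{k!}\bigl[(\partial/\partial z)^k\bigl(e^{xz}-e^{yz}\bigr)^2\bigr]_{z=-1}$.

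Next I would set $g(z)\equiv g_n(z;x,y)=\bigl((e^{xz}-e^{yz})/z\bigr)^2$, the exponential counterpart of the polynomial $g$ used above; it is entire in $z$. The factorization
\begin{equation*}
\frac{e^{xz}-e^{yz}}{z}=(x-y)\int_0^1 e^{(y+t(x-y))z}\,dt
\end{equation*}
(or, equivalently, $\frac{e^{xz}-e^{yz}}{(x-y)z}=e^{yz}\cdot\frac{e^{(x-y)z}-1}{(x-y)z}$ expanded as a power series with nonnegative coefficients in $x,y$) shows that $g(z)$ has a power-series expansion $g(z)=\sum_m c_m z^m$ with all $c_m\ge 0$ when $x,y\ge 0$, hence $g^{(k)}(-1)=\sum_m c_m k!\binom{m}{k}(-1)^{m-k}$ — wait, that sign alternates, so instead I would evaluate $g^{(k)}(-1)$ more carefully: writing $g(z)=(x-y)^2\bigl(\int_0^1 e^{(y+t(x-y))z}dt\bigr)^2=(x-y)^2\int_0^1\!\!\int_0^1 e^{((y+s(x-y))+(y+t(x-y)))z}\,ds\,dt$, each exponential $e^{\alpha z}$ with $\alpha\ge 0$ satisfies $\frac{d^k}{dz^k}e^{\alpha z}\big|_{z=-1}=\alpha^k e^{-\alpha}\ge 0$, so $g^{(k)}(-1)\ge 0$ for all $k\ge 0$. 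This is the exponential analogue of the key nonnegativity claim in Proposition~\ref{prop-bernstein}.

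Then I would run the same algebraic identity: from $z^2 g(z)=(e^{xz}-e^{yz})^2$ and the Leibniz expansion $(z^2g(z))^{(k)}=z^2g^{(k)}(z)+2kzg^{(k-1)}(z)+k(k-1)g^{(k-2)}(z)$, evaluated at $z=-1$, an index shift telescopes the sum into $\sum_{k=0}^{\infty}(\Delta^2 a_k)\frac1{k!}g^{(k)}(-1)$ with $a_k=f(k)$. Convexity of $f$ gives $\Delta^2 a_k=f(k+2)-2f(k+1)+f(k)\ge 0$, and combined with $g^{(k)}(-1)\ge 0$ this yields the theorem. The main obstacle, compared with the polynomial case, is that the sums are now infinite, so I must justify convergence and the interchange of summation and differentiation: the exponential growth hypothesis on $f$ bounds $|a_k|$ by $C e^{\beta k}$, while $\frac1{k!}g^{(k)}(-1)$ decays super-exponentially (being, up to the factor $(x-y)^2$, a value of a fixed entire function of order $1$ evaluated with the Poisson-type weights $\alpha^k e^{-\alpha}/k!$ integrated over a bounded range of $\alpha$), so absolute convergence holds and all manipulations — the Cauchy product in the first step and the telescoping in the last — are legitimate; this is the one place where real work beyond transcribing the Bernstein argument is needed. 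One should also note, as in the remark after Lemma~\ref{lemma-bernstein1}, that the generating-function identity and hence the whole chain is independent of $n$ for the sum over $i,j$ ranging over all of $\mathbb{N}_0$; the parameter $n$ enters only through the rescaling $x\mapsto nx$, $y\mapsto ny$ and the argument $\frac{i+j}{?}$, which here is simply $i+j$, so the statement for general $n$ reduces to the case already handled.
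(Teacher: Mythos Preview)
Your proposal is correct and follows essentially the same three-step scheme as the paper: the Cauchy-product identity for $\sum_i s_i(x)s_{k-i}(y)$, the definition $g(z)=\bigl((e^{xz}-e^{yz})/z\bigr)^2$, the Leibniz/telescoping reduction to $\sum_k(\Delta^2 f(k))\,g^{(k)}(-1)/k!$, and the nonnegativity of $g^{(k)}(-1)$ via a double integral representation. The only cosmetic difference is that the paper writes $g(z)=\int_x^y\!\int_x^y e^{(u+v)z}\,du\,dv$ directly (your parametrization over $[0,1]^2$ is the same integral after a linear change of variables); your added remarks on absolute convergence under the exponential-growth hypothesis and on the irrelevance of $n$ are careful observations that the paper leaves implicit.
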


\begin{proof}
It can easily be verified that 
\begin{equation*}
\sum\limits_{i=0}^{k}s_{i}\left( x\right) s_{k-i}\left( y\right) =\frac{1}{%
k!}\left( x+y\right) ^{k}e^{-\left( x+y\right) }=\frac{1}{k!}\left[ \left( 
\frac{\partial }{\partial z}\right) ^{k}e^{\left( x+y\right) z}\right]
_{\mid z=-1}.
\end{equation*}%
Hence, 
\begin{eqnarray*}
&&\sum\limits_{i=0}^{\infty }\sum\limits_{j=0}^{\infty }\left[ s_{i}\left(
x\right) s_{j}\left( x\right) +s_{i}\left( y\right) s_{j}\left( y\right)
-2s_{i}\left( x\right) s_{j}\left( y\right) \right] {f}\left( i+j\right) \\
&=&\sum\limits_{k=0}^{\infty }\frac{1}{k!}{f}\left( k\right) \left. \left[
\left( \frac{\partial }{\partial z}\right) ^{k}\left(
e^{2xz}+e^{2yz}-2e^{\left( x+y\right) z}\right) \right] \right\vert _{z=-1}
\\
&=&\sum\limits_{k=0}^{\infty }\frac{1}{k!}{f}\left( k\right) \left. \left[
\left( \frac{\partial }{\partial z}\right) ^{k}\left( e^{xz}-e^{yz}\right)
^{2}\right] \right\vert _{z=-1}.
\end{eqnarray*}%
Now we put, for fixed $x,y\geq 0$, 
\begin{equation*}
g\left( z\right) =\left( \frac{e^{xz}-e^{yz}}{z}\right) ^{2}.
\end{equation*}%
Observe that 
\begin{equation*}
g\left( z\right) =\int_{x}^{y}\int_{x}^{y}e^{\left( u+v\right)
z}dudv=\sum\limits_{\nu =0}^{\infty }\frac{\left( z+1\right) ^{\nu }}{\nu !}%
\int_{x}^{y}\int_{x}^{y}\left( u+v\right) ^{\nu }e^{-\left( u+v\right) }dudv
\end{equation*}%
which implies that $g^{\left( k\right) }\left( -1\right) \geq 0$, for $%
k=0,1,\ldots $, if $x,y\geq 0$. As in the Bernstein case we conclude that 
\begin{eqnarray*}
&&\sum\limits_{i=0}^{\infty }\sum\limits_{j=0}^{\infty }\left[ s_{i}\left(
x\right) s_{j}\left( x\right) +s_{i}\left( y\right) s_{j}\left( y\right)
-2s_{i}\left( x\right) s_{j}\left( y\right) \right] {f}\left( i+j\right) \\
&=&\sum\limits_{k=0}^{\infty }{f}\left( k\right) \frac{1}{k!}\left. \left( 
\frac{\partial }{\partial z}\right) ^{k}\left( z^{2}g\left( z\right) \right)
\right\vert _{z=-1} \\
&=&\sum\limits_{k=0}^{\infty }\Delta ^{2}{f}\left( k\right) \cdot \frac{1}{%
k!}g^{\left( k\right) }\left( -1\right)
\end{eqnarray*}%
which completes the proof.
\end{proof}

\section{Baskakov operators}

The Baskakov operators $V_{n}$ associate to each function $f$ of polynomial
growth on $\left[ 0,\infty \right) $ the function 
\begin{equation*}
\left( V_{n}f\right) \left( x\right) =\sum\limits_{\nu =0}^{\infty
}b_{n,\nu }\left( x\right) {f}\left( \frac{\nu }{n}\right) \text{ \qquad }%
\left( x\in \left[ 0,\infty \right) \right) ,
\end{equation*}%
where 
\begin{equation*}
b_{n,\nu }\left( x\right) =\binom{n+\nu -1}{\nu }\frac{x^{\nu }}{\left(
1+x\right) ^{n+\nu }}
\end{equation*}%
denote the Baskakov basis functions. We have 
\begin{equation*}
\sum\limits_{i=0}^{k}b_{n,i}\left( x\right) b_{n,k-i}\left( y\right) =\frac{%
1}{k!}\left[ \left( \frac{\partial }{\partial z}\right) ^{k}\left(
1-xz\right) ^{-n}\left( 1-yz\right) ^{-n}\right] _{\mid z=-1}
\end{equation*}%
and 
\begin{eqnarray*}
&&\sum\limits_{i=0}^{\infty }\sum\limits_{j=0}^{\infty }b_{n,i}\left(
x\right) b_{n,j}\left( x\right) \left[ b_{n,i}\left( x\right) b_{n,j}\left(
x\right) +b_{n,i}\left( y\right) b_{n,j}\left( y\right) -2b_{n,i}\left(
x\right) b_{n,j}\left( y\right) \right] {f}\left( i+j\right) \\
&=&\sum\limits_{k=0}^{\infty }{f}\left( k\right) \frac{1}{k!}\left. \left[
\left( \frac{\partial }{\partial z}\right) ^{k}\left( \left( 1-xz\right)
^{-n}-\left( 1+yz\right) ^{-n}\right) ^{2}\right] \right\vert _{z=-1}
\end{eqnarray*}%
In a similar manner as in the Bernstein case one can show the following
theorem.

\begin{theorem}
\label{theorem-conj-baskakov}Let $n\in \mathbb{N}$. If $f\in C\left[
0,\infty \right) $ is a convex function of polynomial growth, then 
\begin{equation*}
\sum\limits_{i=0}^{\infty }\sum\limits_{j=0}^{\infty }\left[ b_{n,i}\left(
x\right) b_{n,j}\left( x\right) +b_{n,i}\left( y\right) b_{n,j}\left(
y\right) -2b_{n,i}\left( x\right) b_{n,j}\left( y\right) \right] {f}\left(
i+j\right) \geq 0,
\end{equation*}%
for all $x,y\in \left[ 0,\infty \right) $.
\end{theorem}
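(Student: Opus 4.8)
The plan is to carry over the proof of Theorem~\ref{theorem-conj-bernstein} essentially verbatim; the only genuinely new feature is that the generating function occurring here is a power series rather than a polynomial, so the formal manipulations must be accompanied by a convergence check. First, the identity displayed just before the statement rewrites the left-hand side of the claimed inequality as
\begin{equation*}
\sum_{k=0}^{\infty}f\!\left(k\right)\frac{1}{k!}\left.\left(\frac{\partial}{\partial z}\right)^{k}\left[\left(\left(1-xz\right)^{-n}-\left(1-yz\right)^{-n}\right)^{2}\right]\right\vert_{z=-1}.
\end{equation*}
For fixed $x,y\geq0$ I set, in analogy with the Bernstein case,
\begin{equation*}
g\left(z\right)=g_{n}\left(z;x,y\right)=\left(\frac{\left(1-xz\right)^{-n}-\left(1-yz\right)^{-n}}{z}\right)^{2},
\end{equation*}
so that $z^{2}g\left(z\right)=\left(\left(1-xz\right)^{-n}-\left(1-yz\right)^{-n}\right)^{2}$, with $z=0$ a removable singularity. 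The only honest singularities of $g$ are the poles at $z=1/x$ and $z=1/y$, so $g$ is analytic on the disc $\left\vert z+1\right\vert<1+1/\max\{x,y\}$, whose radius exceeds $1$; hence the Taylor coefficients of $g$ about $z=-1$ decay geometrically, and therefore $\sum_{k\geq0}\left\vert f\left(k\right)\right\vert\,\left\vert g^{\left(k\right)}\left(-1\right)\right\vert/k!<\infty$ whenever $f$ has polynomial growth. (The double sum on the left converges absolutely for the same reason, since $b_{n,\nu}\left(x\right)$ decays geometrically in $\nu$.) Using the product rule in the form $\left(z^{2}g\right)^{\left(k\right)}=z^{2}g^{\left(k\right)}+2kzg^{\left(k-1\right)}+k\left(k-1\right)g^{\left(k-2\right)}$ and then performing the same reindexing as in the proof of Proposition~\ref{prop-bernstein} — now legitimate by absolute convergence — one obtains
\begin{equation*}
\sum_{i=0}^{\infty}\sum_{j=0}^{\infty}\left[b_{n,i}\left(x\right)b_{n,j}\left(x\right)+b_{n,i}\left(y\right)b_{n,j}\left(y\right)-2b_{n,i}\left(x\right)b_{n,j}\left(y\right)\right]f\left(i+j\right)=\sum_{k=0}^{\infty}\left(\Delta^{2}f\left(k\right)\right)\frac{1}{k!}g^{\left(k\right)}\left(-1\right).
\end{equation*}

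It remains to prove $g^{\left(k\right)}\left(-1\right)\geq0$ for all $k\geq0$. Differentiating $\left(1-tz\right)^{-n}$ with respect to $t$ gives $\left(1-xz\right)^{-n}-\left(1-yz\right)^{-n}=-nz\int_{x}^{y}\left(1-tz\right)^{-n-1}dt$, hence
\begin{equation*}
g\left(z\right)=n^{2}\int_{x}^{y}\int_{x}^{y}\left(1-uz\right)^{-n-1}\left(1-vz\right)^{-n-1}\,du\,dv .
\end{equation*}
For $u\geq0$ write $1-uz=\left(1+u\right)\bigl(1-\tfrac{u}{1+u}\left(z+1\right)\bigr)$; the binomial expansion
\begin{equation*}
\left(1-uz\right)^{-n-1}=\left(1+u\right)^{-n-1}\sum_{m=0}^{\infty}\binom{n+m}{m}\left(\frac{u}{1+u}\right)^{m}\left(z+1\right)^{m}
\end{equation*}
has nonnegative coefficients in powers of $z+1$, and so does the product of two such series; differentiating under the integral sign (the domain being compact) and using that reversing both limits of integration leaves $du\,dv$ unchanged, one concludes that every Taylor coefficient of $g$ about $z=-1$ is nonnegative, i.e. $g^{\left(k\right)}\left(-1\right)\geq0$. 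Finally, convexity of $f$ gives $\Delta^{2}f\left(k\right)=f\left(k+2\right)-2f\left(k+1\right)+f\left(k\right)\geq0$ for every $k$, so the right-hand side above is a sum of nonnegative terms and the theorem follows.

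I expect the only real obstacle — absent in the polynomial Bernstein setting — to be the bookkeeping about convergence: one must verify that the double sum over $i,j$ and each of the (finitely many, after the product rule) single sums over $k$ converge absolutely, which is exactly where the two standing hypotheses enter, namely the polynomial growth of $f$ played off against the geometric decay of the coefficients of $g$, and $x,y\geq0$, which makes the disc of analyticity of $g$ about $z=-1$ have radius greater than $1$ and makes the binomial coefficients above nonnegative. Every other step is a line-by-line transcription of the proof of Theorem~\ref{theorem-conj-bernstein}.
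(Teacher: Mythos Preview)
Your proof is correct and follows exactly the template the paper lays out: the paper itself gives no proof of this theorem beyond the two displayed identities and the remark that one proceeds ``in a similar manner as in the Bernstein case,'' and you have filled in precisely those details, including the convergence bookkeeping that the paper omits. The only minor stylistic difference is that for the nonnegativity of $g^{(k)}(-1)$ you use an integral representation (mirroring the paper's argument for the Mirakyan--Favard--Sz\'asz case), whereas the most literal transcription of the Bernstein argument would instead factor algebraically via $\dfrac{A^{-n}-B^{-n}}{A-B}=-\sum_{j=0}^{n-1}A^{j-n}B^{-1-j}$ with $A=1-xz$, $B=1-yz$; both routes are equally short and in the spirit of the paper.
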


\strut

\thispagestyle{empty}

\end{document}